\documentclass[final, leqno]{amsart}

\usepackage{amsmath,amsthm}
\usepackage {latexsym}
\usepackage{amssymb}
\usepackage{color}




\newcommand{\eps}{{\varepsilon}}
\newcommand{\R}{{\mathbb R}}

\newcommand{\sph}{{\mathbb S}}

\newcommand{\Compl}{{\mathbb C}}

\newcommand{\les}{\lesssim}

\newcommand{\sigmacheck}{\check{\sigma}}
\newcommand{\la}{\langle}
\newcommand{\ra}{\rangle}

\def\norm[#1][#2]{\|#1\|_{#2}}
\def\bignorm[#1][#2]{\big\|#1\big\|_{#2}}
\def\Bignorm[#1][#2]{\Big\|#1\Big\|_{#2}}
\def\japanese[#1]{\langle #1 \rangle}
\def\Im[#1]{{\rm Im}(#1)}
\def\Re[#1]{{\rm Re}(#1)}

\newtheorem{theorem}{Theorem}

\newtheorem{prop}[theorem]{Proposition}
\newtheorem{proposition}[theorem]{Proposition}

\theoremstyle{remark}
\newtheorem{remark}{Remark}


\begin{document}

\title[The Helmholtz equation with $L^p$ data]
{The Helmholtz equation with $L^p$ data and Bochner-Riesz multipliers}

\date{February 4, 2015}

\author{Michael\ Goldberg}
\thanks{This work is supported in part by Simons Foundation Grant \#281057.}
\address{Department of Mathematics, University of Cincinnati,
Cincinnati, OH 45221-0025}
\email{Michael.Goldberg@uc.edu}

\begin{abstract}
We prove the existence of $L^2$ solutions to the Helmholtz equation
$(-\Delta - 1)u = f$ in $\R^n$ assuming the given data $f$
belongs to $L^{(2n+2)/(n+5)}(\R^n)$ and satisfies the
``Fredholm condition" that $\hat{f}$ vanishes on the unit sphere.
This problem, and similar results for the perturbed Helmholtz
equation $(-\Delta -1)u = -Vu + f$, are connected to the Limiting Absorption
Principle for Schr\"odinger operators.

The same techniques are then used to prove that a wide range of
$L^p \mapsto L^q$ bounds for Bochner-Riesz multipliers
are improved if one considers their action on the closed subspace
of functions whose Fourier transform vanishes on the unit sphere.
\end{abstract}

\maketitle

We consider the existence of a well-defined solution map for the
Helmholtz equation in Euclidean space
\begin{equation} \label{eq:Helmholtz}
\left\{ \begin{aligned}
(-\Delta -1)u &= f  \text{ in } \R^n \\
u&\in L^2(\R^n)
\end{aligned} \right.
\end{equation}
By conjugating with dilations, the same problem can be posed with an operator
$(-\Delta - \lambda^2)$, $\lambda > 0$ with minimal modification.
These equations are translation invariant, so it would be desirable to choose
$f$ from a function space whose norm is also translation invariant.
Our goal is to establish existence of solutions and a norm bound for $u$ in
terms of the $L^p$-norm of the given data $f$, provided $f$ is formally
orthogonal to all plane waves of unit frequency.

The Fourier dual formulation of~\eqref{eq:Helmholtz} is 
\begin{equation} \label{eq:multiplier}
\left\{ \begin{aligned}
\hat{u}(\xi) &= \frac{\hat{f}(\xi)}{|\xi|^2 - 1} \\
\hat{u} &\in L^2(\R^n)
\end{aligned} \right.
\end{equation}
with respect to the definition
$\hat{f}(\xi) = \int_{\R^n} e^{-i \xi \cdot x}f(x)\,dx$.
The corresponding Plancherel identity is
$\norm[\hat{u}][2] = (2\pi)^{n/2}\norm[u][2]$.

It is immediately clear from~\eqref{eq:multiplier} that solutions should be
unique, as $|\xi|^2 - 1$ is nonzero almost everywhere and the Fourier Transform
is (a scalar multiple of) a unitary map between $L^2(dx)$ and $L^2(d\xi)$.
One can also infer that solutions exist only if $\hat{f}$ vanishes on the unit
sphere in a suitable sense, and also the restrictions of $\hat{f}$ to the sphere
of radius $r$ must be controlled as $r$ approaches $1$.

It would be sufficient, for example, if the map
$S(r) = \hat{f}(r\,\cdot\,)\big|_{\sph^{n-1}}$
(taking $\R_+$ into $L^2(\sph^{n-1})$)
was H\"older continuous of order $\gamma > \frac12$ at $r=1$
and vanished there.  Then the scalar restriction function
\begin{equation} \label{eq:F}
F(r) = \norm[\hat{f}][L^2(r\sph^{n-1})]^2
\end{equation}
would be $O(|r-1|^{2\gamma})$, and the formula
\begin{equation} \label{eq:Unorm}
\norm[\hat{u}][2]^2 = \int_0^\infty \frac{F(r)}{(r^2-1)^2}\,dr
\end{equation}
would be locally integrable at $r=1$.

In fact the desired continuity can be achieved if $\hat{f}$ belongs to the
Sobolev space $W^{1,2}(\R^n)$.  While $S(r)$ is only H\"older continuous of order
exactly $\frac12$, the one-dimensional Hardy inequality suffices to establish
integrability of~\eqref{eq:Unorm}.  This argument plays a central
role in Agmon's bootstrapping method for the decay of eigenfunctions
of a Schr\"odinger operator~\cite{Ag75}.  For the Helmholtz equation in
particular the following result is proved there.
\begin{theorem}[Agmon] \label{thm:Agmon}
Suppose $(1+|x|)^\beta f \in L^2(\R^n)$ for some $\beta > \frac12$, and
$\hat{f}$ vanishes on the unit sphere in the $L^2$-trace sense.  Then
there exists a unique function $u$ such that
$(-\Delta -1)u = f$ and $(1+|x|)^{\beta - 1}u \in L^2(\R^n)$.
\end{theorem}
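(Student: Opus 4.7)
The plan is to reduce the problem to an analysis of the Fourier multiplier $(|\xi|^2-1)^{-1}$ using the Bessel-potential characterization of power weights. By Plancherel, the hypothesis $(1+|x|)^\beta f\in L^2(\R^n)$ is equivalent to $\hat{f}\in H^\beta(\R^n)$, and the conclusion $(1+|x|)^{\beta-1}u\in L^2$ is equivalent to $\hat{u}\in H^{\beta-1}(\R^n)$. Since $\beta>\tfrac12$, the Sobolev trace theorem realizes $\hat{f}|_{\sph^{n-1}}$ as a bona fide element of $L^2(\sph^{n-1})$, and the hypothesis says this trace vanishes.

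Uniqueness is immediate from~\eqref{eq:multiplier}: any admissible $u$ must satisfy $\hat{u}(\xi)=\hat{f}(\xi)/(|\xi|^2-1)$ almost everywhere, which determines $u$ up to a null set. For existence, I would \emph{define} $\hat{u}$ by this formula and check that it lies in $H^{\beta-1}(\R^n)$. Choose a smooth cutoff $\chi$ equal to $1$ in a neighborhood of $\sph^{n-1}$ and supported in the annulus $\{\tfrac12<|\xi|<\tfrac32\}$. On the support of $1-\chi$ the multiplier $(|\xi|^2-1)^{-1}$ is bounded and smooth, so $(1-\chi)\hat{u}$ inherits the $H^\beta$ regularity of $\hat{f}$, which is stronger than what is needed.

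The substantive part is the estimate $\chi\hat{u}\in H^{\beta-1}$. Passing to polar coordinates $\xi=r\omega$ and factoring $|\xi|^2-1=(r-1)(r+1)$, the problem reduces to a one-dimensional division inequality: the map $g\mapsto g(r)/(r-1)$ should send $\{g\in H^\beta(\R):g(1)=0\}$ boundedly into $H^{\beta-1}(\R)$. Applying this fiberwise in $\omega$ and using the compatibility of radial/angular and Cartesian Sobolev norms on the annular shell where $\chi$ is supported assembles the desired global bound.

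The main obstacle is this fractional Hardy/division inequality when $\beta\in(\tfrac12,1)$: one cannot conclude $g(r)/(r-1)\in L^2$ from $g\in H^\beta$ with $g(1)=0$, since there is no derivative to be traded, and the natural target is the negative-order space $H^{\beta-1}$. I would prove it via the finite-difference characterization of $H^\beta(\R)$, exploiting that the vanishing condition $g(1)=0$ converts division by $r-1$ into essentially a one-derivative loss. Once $\chi\hat{u}\in H^{\beta-1}$ is in hand, inverting the Fourier transform produces $u$ with $(1+|x|)^{\beta-1}u\in L^2$, and $(-\Delta-1)u=f$ holds by construction.
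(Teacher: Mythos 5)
The paper does not prove Theorem~\ref{thm:Agmon}; it cites Agmon's original paper and gives only a heuristic for the $\beta=1$ case in the paragraph preceding the statement (H\"older-$\frac12$ continuity of $S(r)$ plus the classical one-dimensional Hardy inequality applied to~\eqref{eq:Unorm}). Your proposal is in the same spirit --- pass to the Fourier side, identify the power weight with Sobolev regularity, reduce to a one-dimensional Hardy-type division inequality across the sphere --- but you push it further to the full range $\beta>\tfrac12$, which is what Agmon actually proves. The structural skeleton is sound: the identification $(1+|x|)^\beta f\in L^2 \Leftrightarrow \hat f\in H^\beta$ is correct, the trace is legitimate for $\beta>\tfrac12$, the cutoff away from the sphere is routine, and the polar reduction to a radial division inequality is the right move.

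The proposal stops short of a proof, however, at exactly the step you flag as the main obstacle. You assert that $g\mapsto g/(r-1)$ maps $\{g\in H^\beta(\R):g(1)=0\}$ boundedly into $H^{\beta-1}(\R)$ for $\beta\in(\tfrac12,1)$ and say you ``would prove it via the finite-difference characterization,'' but no argument is supplied, and the lemma is not quite elementary: a naive pointwise H\"older bound $|g(r)|\lesssim|r-1|^{\beta-\frac12}$ gives $g/(r-1)=O(|r-1|^{\beta-\frac32})$, which is not even in $L^2$ for $\beta<1$, so one genuinely needs the negative-order target and a cancellation argument. A clean route is on the Fourier side: if $h=g/(r-1)$ then $i\hat h'-\hat h=\hat g$, and $g(1)=0$ forces $\int e^{is}\hat g(s)\,ds=0$, so $\hat h(\tau)=\mp i e^{-i\tau}\int_{\tau}^{\pm\infty}e^{is}\hat g(s)\,ds$; the weighted one-dimensional Hardy inequality then gives $\int(1+\tau^2)^{\beta-1}|\hat h|^2\lesssim\int(1+\tau^2)^{\beta}|\hat g|^2$ precisely under the condition $2\beta-2>-1$, i.e.\ $\beta>\tfrac12$. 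That is the lemma you need, and it should be stated and proved rather than gestured at. Two smaller points also deserve care: (i) the ``compatibility of radial/angular and Cartesian Sobolev norms'' at fractional and \emph{negative} order is not automatic --- you need both $H^\beta(\R^n)\hookrightarrow L^2_\omega H^\beta_r$ on the annulus and its dual $L^2_\omega H^{\beta-1}_r\hookrightarrow H^{\beta-1}(\R^n)$, and for $\beta-1<0$ the second should be obtained by duality from the first; (ii) for $\beta<1$, $\hat u$ is a priori only a distribution, so ``$\hat u=\hat f/(|\xi|^2-1)$ almost everywhere'' must be justified by noting that distributions supported on $\sph^{n-1}$ do not lie in $H^{\beta-1}$ once $\beta-1>-\tfrac12$, which is exactly the regime $\beta>\tfrac12$. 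None of these gaps is fatal, but as written the proposal is an outline rather than a proof.
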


It is not obvious that a similar result should hold for $f \in L^p(\R^n)$
without weights, 
regardless of the exponent, as an $L^p$ condition typically doesn't guarantee
that $S(r)$ is H\"older continuous of any positive order.  Nevertheless
an $L^2$ solution operator for the Helmholtz equation exists for data
in a narrow range of $L^p$ spaces.

\begin{theorem} \label{thm:Helmholtz}
Let $n \ge 3$ and $\max(1, \frac{2n}{n+4}) \leq p \leq \frac{2n+2}{n+5}$,
with $(n, p) \not= (4,1)$.  Suppose $f \in L^p(\R^n)$ and
$\hat{f}$ vanishes on the unit sphere in the $L^2$-trace sense.  

There exists a unique $u \in L^2(\R^n)$
such that $(-\Delta -1)u = f$.  Furthermore, $\norm[u][2] \leq C_{n,p}\norm[f][p]$.
\end{theorem}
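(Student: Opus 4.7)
The plan is to construct $u$ by the Fourier-multiplier formula $\hat u(\xi) = \hat f(\xi)/(|\xi|^2-1)$ and then show $\|u\|_{L^2}\lesssim\|f\|_{L^p}$. Uniqueness is immediate: if $v\in L^2$ and $(-\Delta-1)v=0$, then $(|\xi|^2-1)\hat v=0$ as $L^2$-functions, forcing $\hat v$ to be supported on the null set $\{|\xi|=1\}$, hence $v=0$.

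For the norm bound I would partition $\R^n$ smoothly into a low-frequency piece $\{|\xi|<\tfrac12\}$, a high-frequency piece $\{|\xi|>2\}$, and a middle piece $\{\tfrac12\le|\xi|\le 2\}$. On the low-frequency piece the symbol $1/(|\xi|^2-1)$ is smooth and compactly supported: its inverse Fourier transform is Schwartz, and Young's convolution inequality yields an $L^p\to L^2$ bound for every $p\in[1,2]$. On the high-frequency piece the symbol behaves like $|\xi|^{-2}$, the symbol of the Bessel potential of order $2$; by Hardy-Littlewood-Sobolev it maps $L^p$ into $L^2$ precisely when $p\ge 2n/(n+4)$, with the endpoint $p=1$ in dimension $n=4$ failing. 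This accounts for the lower bound on $p$ and the $(n,p)\ne(4,1)$ exclusion in the theorem.

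The core of the argument is the middle region, where the multiplier is genuinely singular on the sphere and the hypothesis $\hat f|_{\sph^{n-1}}=0$ must be used. I would decompose dyadically into the annular shells
\[
A_k=\bigl\{\xi\in\R^n : 2^{-k-1}\le\bigl||\xi|-1\bigr|\le 2^{-k}\bigr\},\qquad k\ge 0,
\]
on which $(|\xi|^2-1)^{-1}\simeq 2^k$. The required estimate reduces to
\[
\sum_{k\ge 0} 2^{2k}\int_{A_k}|\hat f(\xi)|^2\,d\xi\lesssim \|f\|_p^2.
\]
The Stein-Tomas restriction theorem applied on each sphere $|\xi|=r$ (applicable because $p\le(2n+2)/(n+5)<2(n+1)/(n+3)$) combined with Fubini in $r$ produces only $\int_{A_k}|\hat f|^2\,d\xi\lesssim 2^{-k}\|f\|_p^2$, leaving a divergent sum $\sum 2^k$; the vanishing of $\hat f$ on the sphere must supply the missing decay.

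The main obstacle is proving an improved H\"older-type bound
\[
\|\hat f(r\,\cdot)\|_{L^2(\sph^{n-1})}\lesssim |r-1|^\gamma\|f\|_p\qquad\text{for some}\ \gamma>\tfrac12,
\]
uniformly in $r$ near $1$. Standard Stein-Tomas at its endpoint $p_0=2(n+1)/(n+3)$ yields only the borderline exponent $\gamma=\tfrac12$, matching the failure one sees in Agmon's Theorem~\ref{thm:Agmon} without Hardy's trick. My plan is to interpolate Stein-Tomas against a derivative-type or Kenig-Ruiz-Sogge uniform resolvent estimate that becomes accessible at the smaller exponent $p=(2n+2)/(n+5)$, exploiting the gap between these two exponents to gain extra H\"older regularity. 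With such an estimate in hand, the integral over $A_k$ is bounded by $2^{-k(1+2\gamma)}\|f\|_p^2$ and the geometric series $\sum 2^{k(1-2\gamma)}$ converges, which closes the argument.
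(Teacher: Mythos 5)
Your overall architecture is sound: the low/high/mid frequency split, the Sobolev embedding for the high-frequency part (which correctly identifies the $\frac{2n}{n+4}$ lower bound and the $(4,1)$ exclusion), Hausdorff--Young for the low frequencies, and the reduction of the middle piece to
\[
\sum_{k\ge 0} 2^{2k}\int_{A_k}|\hat f(\xi)|^2\,d\xi\lesssim \|f\|_p^2,
\]
all match the paper's outline. You have also correctly diagnosed why bare Stein--Tomas fails: it gives $\int_{A_k}|\hat f|^2\lesssim 2^{-k}\|f\|_p^2$ and the resulting geometric series $\sum 2^k$ diverges.

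However, the fix you propose cannot work. You want a H\"older-type estimate
$\|\hat f(r\,\cdot)\|_{L^2(\sph^{n-1})}\lesssim |r-1|^\gamma\|f\|_p$
with $\gamma>\tfrac12$ at $p=\frac{2n+2}{n+5}$, to be obtained by interpolation. This estimate is \emph{false}, and the obstruction is exactly the Knapp example the paper uses to verify sharpness. Take $\hat f$ a bump on the slab $\{|\xi'|\le\delta,\;1-2\delta^2\le\xi_n\le1-\delta^2\}$; then $\hat f$ vanishes identically on the unit sphere, $\|f\|_p\sim\delta^{(n+1)(1-1/p)}$, and for $r\approx 1-\delta^2$ one has $\|\hat f(r\,\cdot)\|_{L^2(\sph^{n-1})}^2\sim\delta^{n-1}$. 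Plugging into the proposed H\"older bound with $|r-1|\sim\delta^2$ forces $\tfrac{n-1}{2}\ge 2\gamma+(n+1)(1-\tfrac1p)$, and at $p=\frac{2n+2}{n+5}$ this reads $2\gamma\le 1$, so $\gamma\le\tfrac12$. So no interpolation scheme can push past $\gamma=\tfrac12$, and with $\gamma=\tfrac12$ your series $\sum 2^{k(1-2\gamma)}=\sum 1$ still diverges (this is precisely the borderline that Agmon handles with Hardy's inequality, which exploits a derivative rather than a pure size bound).

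The paper avoids this by never attempting a pointwise-in-$r$ restriction estimate. Instead it studies the kernel
\[
K_1^\eps = \int_{-1/2}^{1/2}\frac{\sigmacheck_{1+s}-\sigmacheck_1}{(s^2(2+s)^2+\eps^2)^\alpha}\,ds
\]
and splits the weight in $s$ into its even and odd parts, pairing $A_{\mathrm{even}}(s)\lesssim s^{-2\alpha}$ with the \emph{second} difference $\sigmacheck_{1+s}-2\sigmacheck_1+\sigmacheck_{1-s}$ and $A_{\mathrm{odd}}(s)\lesssim s^{1-2\alpha}$ with the first difference $\sigmacheck_{1+s}-\sigmacheck_{1-s}$; Taylor-remainder bounds on $\partial_r^k(\eta_j\sigma_r)\check{\phantom i}$ after a conical decomposition then give an integrable kernel. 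The point is that this is a genuine $TT^*$/bilinear cancellation between $\sigma_{1+s}$ and $\sigma_{1-s}$, which captures oscillation not visible in a size estimate of $\|\hat f(r\,\cdot)\|_{L^2(\sph^{n-1})}$ at a single $r$. That extra cancellation is exactly what your dyadic-slab reduction throws away, and what you would need to recover to make your outline close.
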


There is no statement in dimensions $1$ or $2$ because
$\frac{2n+2}{n+5} < 1$.  When $n=1$ it should suffice to allow
$e^{\pm i |x|}f$ to belong to the Hardy space $H^{2/3}(\R)$.
It is less clear what cancellation conditions might be required for $n=2$.

The lower exponent bound of $\frac{2n}{n+4}$ comes from Sobolev embedding.
It can be disregarded if one applies any sort of cutoff to remove high
frequencies.  As a special case, the sharp cutoff at $|\xi| = 1$ leaves 
a Bochner-Riesz multiplier of order -1.  For further discussion of these
operators we adopt the definition
\begin{equation} \label{eq:B-R}
(S^\alpha f)\hat{\phantom{i}}(\xi) = (1-|\xi|^2)_+^\alpha \hat{f}(\xi).
\end{equation}
For $\alpha \le -1$ we define $S^{\alpha}$ by (formal) positivity
of the operator rather than by analytic continuation.  This preserves the
multiplicative structure $S^\alpha S^\beta = S^{\alpha + \beta}$,
however it comes at the cost that $S^{\alpha}$ will not have a bounded action
on general Schwartz functions once $\alpha \le -1$.

Never the less, $S^\alpha$ may behave well when applied to functions whose
Fourier transform vanishes on the unit sphere, as stated below.

\begin{theorem} \label{thm:B-R}
Let $n \ge 2$ and $\frac12 \leq \alpha < \frac32$.
Suppose $f \in L^p(\R^n)$, $1 \leq p \leq \frac{2n+2}{n+1+4\alpha}$
with $(\alpha, p) \not= (\frac12, \frac{2n+2}{n+3})$, and
suppose $\hat{f}$ vanishes on the unit sphere.

Then $\norm[S^{-\alpha}f][2] \les \norm[f][p]$.
\end{theorem}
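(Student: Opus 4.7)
The plan is to pass to the Fourier side via Plancherel, decompose dyadically in the radial coordinate near the unit sphere, and control each annular piece by a thickened restriction-type estimate strengthened by the trace-zero hypothesis on $\hat f$. In polar coordinates Plancherel gives
\[
\|S^{-\alpha}f\|_2^2 \;=\; c_n\int_0^1 (1-r^2)^{-2\alpha}\,F(r)\, r^{n-1}\,dr,
\]
with $F$ as in~\eqref{eq:F}. The contribution from $r \in [0,\tfrac12]$ is easily absorbed via a Hausdorff--Young--type bound on the multiplier combined with a crude $L^p\to L^2$ estimate in the Stein--Tomas range, so the task reduces to the near-sphere portion.

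Writing the near-sphere piece as a sum over dyadic shells $1-r\sim 2^{-j}$, I would reduce the theorem to an annular estimate of the form
\[
\int_{1-2^{-j+1}}^{\,1-2^{-j}}F(r)\,dr \;\lesssim\; 2^{-(2\alpha+\varepsilon)j}\,\|f\|_p^2, \qquad j\ge 1,
\]
for some $\varepsilon>0$ depending on how strictly $p$ lies below the critical exponent; the geometric series $\sum_j 2^{2j\alpha}\cdot 2^{-(2\alpha+\varepsilon)j}$ is then summable to $\|f\|_p^2$. To prove the annular bound I would rescale the $\delta$-shell (with $\delta=2^{-j}$) to unit thickness and run a $TT^*$ argument of Tomas type on the rescaled object: the kernel is the inverse Fourier transform of the characteristic function of the shell, its decay is controlled by stationary phase on $\sph^{n-1}$, and the standard $TT^*$ bookkeeping yields a $\delta^{1/2}$ gain --- exactly matching $\alpha=\tfrac12$ at the Stein--Tomas exponent. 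The extra $\delta^{\alpha-1/2+\varepsilon/2}$ needed for $\alpha>\tfrac12$ should be supplied by combining the vanishing $\hat f|_{\sph^{n-1}}=0$ (which forces $F(r)\to 0$ as $r\to 1$) with the radial-H\"older regularity of $\hat f$ that becomes available once $p$ is strictly smaller than the Stein--Tomas exponent; concretely, a continuity estimate for $r\mapsto \hat f(r\,\cdot\,)$ as an $L^2(\sph^{n-1})$-valued function.

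The main obstacle is making this H\"older-type radial continuity quantitative in a way that matches the exponent relation $\tfrac1p - \tfrac12 = \tfrac{2\alpha}{n+1}$. The two gains --- one from shell-thickening via $TT^*$, the other from vanishing on the sphere --- must be balanced exactly, and this balancing both accounts for the factor $4\alpha$ in the denominator of the admissible range and singles out the corner $(\alpha,p)=(\tfrac12,\tfrac{2n+2}{n+3})$ (the Stein--Tomas endpoint, where the gain from vanishing degenerates to $\varepsilon=0$) as exceptional. Once the annular estimate is in hand, summation over $j$ concludes.
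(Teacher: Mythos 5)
The key step of your proposal is the annular estimate
\[
\int_{1-2\delta}^{1-\delta} F(r)\,dr \lesssim \delta^{2\alpha+\varepsilon}\|f\|_p^2,
\]
and you propose to obtain the factor $\delta^{\alpha - 1/2 + \varepsilon/2}$ beyond the thickened Stein--Tomas gain from ``radial-H\"older regularity of $\hat f$ that becomes available once $p$ is strictly smaller than the Stein--Tomas exponent,'' plus the vanishing of $\hat f$ on $\sph^{n-1}$. This is the gap: an $L^p$ hypothesis with $p<\tfrac{2n+2}{n+3}$ does \emph{not} yield any H\"older modulus for $r\mapsto\hat f(r\,\cdot\,)$ in $L^2(\sph^{n-1})$ (even $f\in L^1$ fails this --- lacunary examples give $\hat f$ with no positive H\"older exponent), and the vanishing condition is purely qualitative, so it carries no rate on its own. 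The paper is explicit that this route is blocked: ``an $L^p$ condition typically doesn't guarantee that $S(r)$ is H\"older continuous of any positive order.'' What you need instead is a cancellation built into the \emph{operator} rather than into the individual function $\hat f$. The paper accomplishes this by replacing the multiplier weight by the kernel $\int\frac{\check\sigma_r - \check\sigma_1}{((1-r^2)^2+\varepsilon^2)^\alpha}\,dr$, splitting the weight into even and odd parts of $s=r-1$, and harvesting the gain from Taylor-remainder bounds on $\check\sigma_{1\pm s}-\check\sigma_1$ together with a conical decomposition and a one-dimensional Hardy--Littlewood--Sobolev step. That structure supplies the ``averaged'' version of the H\"older continuity you are hoping for, without requiring any pointwise regularity of $\hat f$.

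There is also a quantitative obstruction at the endpoint. The Knapp example in the paper (cap of radial thickness $\mu$, tangential scale $\mu^{1/2}$) shows that at $p=\tfrac{2n+2}{n+1+4\alpha}$ the annular estimate is \emph{sharp} with $\varepsilon=0$: $\int_{1-2\mu}^{1-\mu}F\,dr \sim \mu^{2\alpha}\|f\|_p^2$. Consequently your dyadic sum $\sum_j 2^{2j\alpha}\int_{I_j}F$ has no geometric gain at the critical exponent, only a worst-case logarithmic divergence, so a shell-by-shell argument cannot reach the endpoint that the theorem includes (the only corner excluded is $(\alpha,p)=(\tfrac12,\tfrac{2n+2}{n+3})$). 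The paper's global kernel argument avoids this loss by not decomposing dyadically in $r$ at all --- the cancellation acts across all scales simultaneously via the even/odd split of $A_{even},A_{odd}$ and the Taylor bounds, and the final summation is handled by one-dimensional fractional integration rather than by summing a geometric series. If you want to preserve a dyadic framework, you would need both a genuine mechanism for the annular estimate (e.g.\ a $TT^*$ argument on the \emph{subtracted} kernel $\check{\mathbf 1}_{A_\delta}-c_\delta\check\sigma_1$) and an almost-orthogonality or vector-valued device to recover the endpoint, neither of which is present in the current sketch.
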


Both Theorems~\ref{thm:Helmholtz} and~\ref{thm:B-R} are easily derived from
the following statement, which is our main technical result.

\begin{prop} \label{prop:main}
Let $n \geq 2$ and $\frac12 < \alpha < \frac32$.  Suppose 
$f \in L^p(\R^n)$, $1 \leq p \leq \frac{2n+2}{n+1+4\alpha}$.
There is a constant $C_\alpha$ such that
\begin{equation} \label{eq:main}
\bigg| \int_{\frac12 < |\xi| < \frac32} \frac{|\hat{f}(\xi)|^2}{((1-|\xi|^2)^2 + \eps^2)^{\alpha}}\,d\xi
- \frac{C_\alpha}{\eps^{2\alpha-1}} \norm[\hat{f}][L^2(\sph^{n-1})]^2 \bigg|
\les  \norm[f][p]^2
\end{equation}
with a constant that remains bounded in the limit $\eps \to 0$.
\end{prop}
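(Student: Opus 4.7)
The plan is to pass to polar coordinates, isolate explicitly the term responsible for the $\eps^{1-2\alpha}$ singularity, and control the remainder by a quantitative Hölder-type estimate for the spherical $L^2$-norm of $\hat f$.  Setting $\xi = r\omega$ with $r>0$ and $\omega\in\sph^{n-1}$, and letting $F(r)=\|\hat f\|_{L^2(r\sph^{n-1})}^2$ as in~\eqref{eq:F}, the integral in~\eqref{eq:main} becomes $\int_{1/2}^{3/2} F(r)\,\phi_\eps(r)\,dr$, where I write $\phi_\eps(r) := ((1-r^2)^2+\eps^2)^{-\alpha}$.  The substitution $r=1+\tfrac{\eps u}{2}$, which uses $1-r^2\approx -2(r-1)$ near $r=1$, shows
\[
\int_{1/2}^{3/2}\phi_\eps(r)\,dr \;=\; \frac{C_\alpha}{\eps^{2\alpha-1}} + O(1),
\qquad C_\alpha := \tfrac12\int_{\R}\frac{du}{(u^2+1)^\alpha},
\]
and the finiteness of $C_\alpha$ is precisely the hypothesis $\alpha > \tfrac12$.

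Split $F(r) = F(1) + (F(r) - F(1))$.  Because the condition $p\le\frac{2n+2}{n+1+4\alpha}$ is strictly stronger than the Stein--Tomas range $p\le\frac{2n+2}{n+3}$ (for $\alpha>\tfrac12$), the Stein--Tomas restriction estimate gives $F(1)\les\|f\|_p^2$, so the $F(1)$ piece contributes the desired main term $\tfrac{C_\alpha}{\eps^{2\alpha-1}}F(1)$ plus a bounded error.  It remains to establish
\[
\biggl|\int_{1/2}^{3/2}(F(r)-F(1))\,\phi_\eps(r)\,dr\biggr| \;\les\; \|f\|_p^2
\]
uniformly in $\eps$.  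Since $\|\phi_\eps\|_{L^1}\sim\eps^{1-2\alpha}$ concentrates at $r=1$, this amounts in effect to a Hölder-type modulus $|F(r)-F(1)|\les|r-1|^{2\alpha-1}\|f\|_p^2$; the exponent $2\alpha-1$ is exactly what cancels the radial $L^1$ mass of $\phi_\eps$.  Using the identity $|\hat f(r\omega)|^2-|\hat f(\omega)|^2 = \operatorname{Re}((\hat f(r\omega)-\hat f(\omega))\overline{(\hat f(r\omega)+\hat f(\omega))})$, Cauchy--Schwarz, and Stein--Tomas on the ``sum'' factor, this Hölder bound in turn reduces to a \emph{differentiated} restriction inequality
\[
\|\hat f(r\,\cdot\,) - \hat f(\cdot)\|_{L^2(\sph^{n-1})} \;\les\; |r-1|^{2\alpha-1}\,\|f\|_p, \qquad p\le\tfrac{2n+2}{n+1+4\alpha}.
\]

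This last inequality is the main obstacle; the exponent shift $\tfrac1{p}-\tfrac{n+3}{2n+2}=\tfrac{2\alpha-1}{n+1}$ from Stein--Tomas exactly measures the $2\alpha-1$ ``radial derivatives'' gained.  I would prove it by Stein's analytic interpolation: embed the difference $\hat f(r\,\cdot\,)-\hat f(\cdot)$ into an operator-valued analytic family parametrised by a complex radial-fractional-derivative order $z$, verify a trivial $L^1\to L^\infty$ bound at one endpoint and the classical Stein--Tomas $L^{(2n+2)/(n+3)}\to L^2$ bound at the other, and interpolate to $z=2\alpha-1$.  An alternative route is a dyadic decomposition of the annulus into slabs $\{|1-|\xi|^2|\sim 2^{-k}\}$, applying a parabolically rescaled Stein--Tomas bound to each slab (producing a loss $2^{-k(2\alpha-1)}$) and summing the resulting geometric series.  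For $\alpha$ near $\tfrac32$, where $F$ cannot literally be Hölder of order $>1$, one must additionally use the observation that $\phi_\eps$ is nearly symmetric about $r=1$; the antisymmetric part of $\phi_\eps$ integrated against the first-order Taylor correction of $F$ produces only an $O(\|f\|_p^2)$ contribution, so that the argument closes throughout the stated range of $\alpha$.
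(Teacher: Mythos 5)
Your high-level ansatz --- isolate the $F(1)$ term, which matches the singular $\eps^{1-2\alpha}$ contribution via Stein--Tomas, and estimate the remainder $\int (F(r)-F(1))\phi_\eps(r)\,dr$ --- is the right shape of argument. But the reduction to a ``differentiated restriction inequality'' breaks in two independent places.

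\textbf{The proposed key lemma is false.} Consider a Knapp slab $\hat f = \1_{\{|\xi'|<\delta,\ 1<\xi_n<1+\delta^2\}}$ and take $r - 1 \sim \delta^2$, so that $r\sph^{n-1}$ passes through the interior of the slab while $\sph^{n-1}$ does not. Then $\|\hat f(r\,\cdot\,)-\hat f\|_{L^2(\sph^{n-1})} \sim \delta^{(n-1)/2}$, whereas at the endpoint $p = \frac{2n+2}{n+1+4\alpha}$ one has $|r-1|^{2\alpha-1}\|f\|_p \sim \delta^{2(2\alpha-1)}\cdot\delta^{(n+1-4\alpha)/2} = \delta^{(4\alpha+n-3)/2}$. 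The ratio is $\delta^{1-2\alpha}\to\infty$ for $\alpha>\tfrac12$. So the inequality
\[
\|\hat f(r\,\cdot\,) - \hat f\|_{L^2(\sph^{n-1})} \les |r-1|^{2\alpha-1}\|f\|_p
\]
fails; the best exponent consistent with Knapp is $\alpha-\tfrac12$, not $2\alpha-1$. The source of the error is the Cauchy--Schwarz decoupling of $F(r)-F(1)$ into a ``difference'' factor and a ``sum'' factor: on this example the sum factor $\|\hat f(r\,\cdot\,)+\hat f\|_2 \sim \delta^{(n-1)/2}$ is strictly \emph{smaller} than $\|f\|_p\sim\delta^{(n+1-4\alpha)/2}$ by a factor $\delta^{2\alpha-1}$, and precisely this slack must be reassigned to the difference factor for the product bound $|F(r)-F(1)|\les|r-1|^{2\alpha-1}\|f\|_p^2$ to come out right. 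Estimating the two factors separately as you propose loses exactly that $\delta^{1-2\alpha}$.

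\textbf{Even granting the H\"older bound for $F$, the radial integral diverges.} Your claim that ``the exponent $2\alpha-1$ is exactly what cancels the radial $L^1$ mass of $\phi_\eps$'' is off by a logarithm. With $s=r-1$,
\[
\int_0^{1/2} \frac{s^{2\alpha-1}}{(s^2+\eps^2)^{\alpha}}\,ds \ \sim\ \int_0^{\eps} + \int_\eps^{1/2} \ \sim\ 1 + \int_\eps^{1/2}\frac{ds}{s}\ \sim\ \log\tfrac{1}{\eps},
\]
so taking absolute values before integrating in $r$ can never yield the $O(1)$ remainder claimed in~\eqref{eq:main}. The even/odd splitting of $\phi_\eps$ into $A_{even}, A_{odd}$ and the corresponding first/second radial differences are therefore not an endpoint fix for $\alpha$ near $\tfrac32$; they are \emph{the} cancellation mechanism for every $\alpha\in(\tfrac12,\tfrac32)$, and they must be applied before absolute values are taken in $s$.

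The paper implements exactly that cancellation, but at the level of the convolution kernel rather than the scalar function $F$: it estimates $K_1^\eps = \int A_{odd}(s)(\sigmacheck_{1+s}-\sigmacheck_{1-s}) + A_{even}(s)(\sigmacheck_{1+s}-2\sigmacheck_1+\sigmacheck_{1-s})\,ds$ in physical space using a conical decomposition, stationary-phase bounds on $\sigmacheck_r$, and Taylor remainder estimates of the form $|\partial_r^k(\eta_n\sigma_r)\check{\ }(x)|\les(1+|x_n|)^{(1-n)/2+k}$, then runs the $T^*T$/Stein--Tomas machinery once on the integrated kernel $K_2^\eps$. Because the $s$-integration happens inside the kernel estimate $|K_2^\eps(x)|\les(1+|x_n|)^{(4\alpha-1-n)/2}$, the information that gets lost in your radial decoupling is retained. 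If you want to keep a polar-coordinate framework you would need an operator-level even/odd decomposition (bounding $\la(\sigmacheck_{1+s}-2\sigmacheck_1+\sigmacheck_{1-s})*f,f\ra$ and $\la(\sigmacheck_{1+s}-\sigmacheck_{1-s})*f,f\ra$ uniformly after weighting by $A_{even},A_{odd}$ and integrating in $s$) rather than a scalar H\"older bound on $F$.
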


In both theorems, it is given that $\hat{f}$ vanishes on the unit sphere,
eliminating the
$\eps^{1-2\alpha}\norm[\hat{f}][L^2(\sph^{n-1})]^2$ term from the left
side of~\eqref{eq:main}.
Assuming Proposition~\ref{prop:main} holds, the same inequality is then
true with $\eps = 0$ by monotone convergence.
The Hausdorff-Young inequality is more than sufficient to bound the left-side
integral over the center region $\{|\xi| <\frac12\}$ for any $f \in L^p$,
$1 \leq p \leq 2$.

For Theorem~\ref{thm:Helmholtz}, let $\chi \in C^\infty_c(\R^n)$ be any smooth 
cutoff that is identically 1 in the ball $\{|\xi| \leq \frac54\}$ and
has support in the ball of radius $\frac32$.
Theorem~\ref{thm:Helmholtz} then reduces to the $\alpha =1 $ case
of Proposition~\ref{prop:main} combined with a Sobolev embedding estimate
for the high frequency tail $\frac{1-\chi}{|\xi|^2 - 1}\hat{f}$.
In a similar manner, all cases
of Theorem~\ref{thm:B-R} with $\alpha > \frac12$ follow from the Proposition by
applying the multiplier of the unit ball, which is bounded on $L^2(\R^n)$.

Finally, if $\alpha = \frac12$ and $p \in [1, \frac{2n+2}{n+3})$,
we have already established Theorem~\ref{thm:B-R} for the pair $(\beta, p)$
with $\beta = \min\big((n+1)(\frac{1}{2p} - \frac14), 1\big)$.  Since 
$\beta > \frac12$, it follows that 
$\norm[S^{-1/2}f][2] \leq \norm[S^{-\beta}f][2]$ by Plancherel's formula.

Sharpness of the upper exponent $\frac{2n+2}{n+1+4\alpha}$ is verified
using a Knapp-type example.  Let $\hat{f}$ be a smooth compactly supported
function, suitably scaled to have support in the slab 
$\{|\xi'| \leq \delta, \;1-2\delta^2 \leq \xi_n \leq 1-\delta^2\}$, where
$\xi' = (\xi_1, \xi_2, \ldots , \xi_{n-1}) \in \R^{n-1}$ and unit height. 
Then $|f(x)| \sim \delta^{n+1}$ over the dual region 
$\{|x'| \leq \delta^{-1},\, |x_n| \leq \delta^{-2}\}$ and has rapid decay
elsewhere.  It follows that $\norm[f][p] \sim \delta^{(n+1)(1-p^{-1})}$
and $\norm[S^{-\alpha}f][2] \sim \delta^{\frac{n+1}{2}-2\alpha}$.  If
$p > \frac{2n+2}{n+1+4\alpha}$ then $(n+1)(1-p^{-1}) > \frac{n+1}{2} - 2\alpha$
and Theorem~\ref{thm:B-R} fails by taking $\delta$ to zero.

\begin{remark}
When $0 < \alpha < \frac12$, no vanishing condition on the unit sphere is
needed in the statement of Theorem~\ref{thm:B-R}.
The range of viable exponents is once again
$p \in [1, \frac{2n+2}{n+1+4\alpha}]$ including the endpoints~\cite{BaMcOb95}.
The full range of $L^p \to L^q$ mappings in this regime is established
in~\cite{Ba97}.
\end{remark}

\begin{remark}
The statement of Proposition~\ref{prop:main} is not true for $\alpha > 1$
if integration is
limited to the inner annulus $\{\frac12  < |\xi| < 1\}$.  An additional remainder
term of order $\eps^{2-2\alpha}$ is present in that case.  The same remainder
term appears with the opposite sign if one integrates over the outer annulus
$\{1 < |\xi| < \frac32\}$.  This illustrates a difference in behavior between
``one-sided" and "two-sided" Bochner-Riesz multipliers of order below $-1$,
with the former being modestly more singular than the latter.
\end{remark}

\begin{remark}
The endpoint case $\alpha = \frac12$, $p = \frac{2n+2}{n+3}$ is quite delicate.
The conclusion is certainly false if one does not assume that $\hat{f}$
vanishes on the unit sphere.
In one dimension it remains false even with the vanishing condition.
Since $S^{-\frac12}$ in one dimension
is closely related to the fractional integral operator $I_{1/2}$, 
a stronger condition that $e^{\pm i x}f$
belongs to the Hardy space $H^1(\R)$ is needed to guarantee that
$S^{-1/2}f \in L^2(\R)$.

The one-dimensional counterexamples do not generalize well to $n \geq 2$.
We believe it is an open problem whether Theorem~\ref{thm:B-R} is true 
in these endpoint cases.
\end{remark}

It is possible to extend Theorem~\ref{thm:B-R} further by interpolation 
with other known estimates for Bochner-Riesz operators, subject to a few
technical limitations.  In this paper we do not assemble a full catalog
of such estimates but instead consider a family of
bounds that are sharp with respect to Knapp counterexamples.

\begin{theorem} \label{thm:B-Rextension}
Let $n \geq 2$ and $\beta \in (\frac12, \frac32)$ with 
$\beta \leq \frac{n+1}{4}$ .  Suppose 
$f \in L^{\frac{2n+2}{n+1+4\beta}}(\R^n)$ and $\alpha \in
[\beta, 2\beta]$ with $\alpha < 2$.  Then
\begin{equation}
\norm[S^{-\alpha} f][\frac{2n+2}{n+1-4(\alpha-\beta)}]
\les \norm[f][\frac{2n+2}{n+1+4\beta}].
\end{equation}
\end{theorem}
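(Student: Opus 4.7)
The plan is to prove Theorem~\ref{thm:B-Rextension} by Stein complex interpolation between the two endpoints $\alpha=\beta$ and $\alpha=2\beta$, both of which follow from Theorem~\ref{thm:B-R}. A direct computation verifies that the target exponent $q=\frac{2n+2}{n+1-4(\alpha-\beta)}$ equals $2$ at $\alpha=\beta$ and equals the conjugate $p'=\frac{2n+2}{n+1-4\beta}$ at $\alpha=2\beta$, and that the bilinear interpolation identity
\[
\frac{1}{q'_{\theta}}=\frac{1-\theta}{2}+\frac{\theta}{p},\qquad \alpha_{\theta}=(1+\theta)\beta,\qquad \theta\in[0,1],
\]
recovers the stated exponent throughout the intermediate range. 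The hypothesis $\beta\le(n+1)/4$ is precisely what forces $p=\frac{2n+2}{n+1+4\beta}\ge 1$ in Theorem~\ref{thm:B-R}.

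Both endpoints are naturally phrased as bilinear bounds on the closed subspace of $L^{p}$ whose members have sphere-vanishing Fourier transforms. The lower endpoint $\alpha=\beta$ is just Cauchy--Schwarz combined with Theorem~\ref{thm:B-R}:
\[
|\langle g,S^{-\beta}f\rangle|\le\|g\|_{2}\,\|S^{-\beta}f\|_{2}\lesssim\|g\|_{2}\,\|f\|_{p}.
\]
The upper endpoint $\alpha=2\beta$ uses the factorisation $(1-|\xi|^{2})^{-2\beta}_{+}=[(1-|\xi|^{2})^{-\beta}_{+}]^{2}$, Cauchy--Schwarz, and Plancherel to give
\[
|\langle g,S^{-2\beta}f\rangle|\lesssim\|(1-|\xi|^{2})^{-\beta}_{+}\hat g\|_{2}\,\|(1-|\xi|^{2})^{-\beta}_{+}\hat f\|_{2}\lesssim\|g\|_{p}\,\|f\|_{p},
\]
invoking Theorem~\ref{thm:B-R} twice.

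For the interpolation step I would embed $S^{-\alpha}$ in the entire analytic family $T_{z}$ with Fourier multiplier $(1-|\xi|^{2})^{-z}_{+}/\Gamma(1-z)$ and apply Stein's theorem to the associated bilinear form on the strip $\beta\le\operatorname{Re}z\le 2\beta$. On either vertical boundary the oscillatory factor $(1-|\xi|^{2})^{-i\operatorname{Im}z}_{+}$ has modulus one on its support, so the only growth in $|\operatorname{Im}z|$ comes from $1/|\Gamma(1-z)|$, which is admissible. The resulting bound $|\langle g,S^{-\alpha}f\rangle|\lesssim\|g\|_{q'}\|f\|_{p}$ upgrades to the operator inequality $\|S^{-\alpha}f\|_{q}\lesssim\|f\|_{p}$ by density, since for $q'<\infty$ the sphere-vanishing functions are dense in $L^{q'}$ (by a thin-shell smoothing on the Fourier side).

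The principal obstacle is the duality argument at $\alpha=2\beta$: Theorem~\ref{thm:B-R} applies only when $\hat g$ vanishes on the sphere, so the Cauchy--Schwarz bound above is available only on the closed subspace $V\subset L^{p}$, and the bilinear formulation on $V\times V$ is what lets the argument go through. Two subsidiary points require attention. First, the simple pole of $\Gamma(1-z)$ at $z=1$ means that the chosen analytic family does not coincide with $S^{-1}$ at $\alpha=1$, so a separate direct argument (via Theorem~\ref{thm:Helmholtz} and Sobolev embedding) is needed whenever $\alpha=1$ falls in the interpolation range. Second, when $2\beta\ge 2$ the right endpoint lies outside the hypothesis $\alpha<2$; here the interpolation must be performed inside a subordinate strip $[\beta,\alpha_{0}]$ with $\alpha_{0}<2$, and the same interpolation identity produces the stated target exponent throughout the feasible range.
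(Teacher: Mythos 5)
Your overall architecture matches the paper's: you interpolate the analytic Bochner--Riesz family $\tilde S^z$ in the strip $-2\beta \le \operatorname{Re}z \le -\beta$, read off the same target exponents, and use Theorem~\ref{thm:B-R} at the lower line $\operatorname{Re}z=-\beta$. The genuine divergence is at the upper line, and that is where your argument has a real gap.

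You obtain the endpoint $\alpha=2\beta$ by factoring the multiplier, Cauchy--Schwarz, and two applications of Theorem~\ref{thm:B-R}, which forces \emph{both} $\hat f$ and $\hat g$ to vanish on the sphere. You then propose to pass from the bilinear bound on $V\times V$ to the operator bound $\norm[S^{-\alpha}f][q]\les\norm[f][p]$ by density of sphere-vanishing functions in $L^{q'}$. That density fails precisely in the range that matters: here $q'=\tfrac{2n+2}{n+1+4(\alpha-\beta)}$, and at the upper endpoint $q'=p=\tfrac{2n+2}{n+1+4\beta}<\tfrac{2n+2}{n+3}$. In the Stein--Tomas range the restriction map $g\mapsto \hat g|_{\sph^{n-1}}$ is continuous on $L^{q'}$, so the sphere-vanishing functions form a \emph{proper closed} subspace of $L^{q'}$, not a dense one. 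The duality/density step therefore does not produce a bound against all of $L^{q'}$, and the interpolation collapses. (The thin-shell smoothing heuristic also does not give $L^{q'}$-convergence in physical space for the smoothed approximants, since Hausdorff--Young runs the wrong direction.)

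The paper avoids this entirely. Its upper endpoint is Proposition~\ref{prop:complexB-R}, a Stein--Tomas-type $L^{p}\to L^{p'}$ estimate for the \emph{analytically continued} family $(z+2)\Gamma(z+1)\tilde S^z$ on the line $\operatorname{Re}z=-2\beta$, proved from kernel asymptotics and oscillatory-integral estimates. Crucially, that bound holds for \emph{all} $f$ (and all test functions $g$) with no Fourier-vanishing hypothesis, so the three-lines argument closes with test functions ranging over all simple $g$ as in Riesz--Thorin; the vanishing of $\hat f$ is used only to remove the pole of $\Gamma(z+1)$ at $z=-1$ and to identify the analytic family with $S^{-\alpha}$ at the end. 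Your plan substitutes a "cheap" bilinear endpoint for this harder oscillatory estimate, but the price is exactly the failure of density, so the shortcut does not go through. Your two "subsidiary points" are also slightly off: the $\alpha=1$ case needs no separate argument (the singularity of $G(z)$ at $z=-1$ is removable once $\hat f$ vanishes on $\sph^{n-1}$), and the restriction $\alpha<2$ is not an interpolation-strip issue but arises only when converting $\tilde S^{-\alpha}$ back to $S^{-\alpha}$ by multiplying by $\Gamma(1-\alpha)$.
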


\begin{remark}
The restriction $\alpha < 2$ may be removed if one instead
considers the analytic family of operators $\tilde{S}^{-\alpha}
 = \Gamma(1-\alpha)^{-1}S^{-\alpha}$.  This will be evident
in the proof.
\end{remark}

Theorem~\ref{thm:Agmon} plays an important role in the spectral theory of
Schr\"odinger operators $H = -\Delta + V(x)$ with a short-range 
potential.  Namely, it is used in a bootstrapping
argument to show that any singular part of the essential spectrum of $H$
must contain embedded eigenvalues.  Thus the spectral measure on
compact subsets of $[0,\infty) \setminus \sigma_{pp}(H)$ is absolutely continuous
and satisfies an assortment of uniform mapping properties.
In section~\ref{sec:embedded} we present a similar bootstrapping application
using Theorem~\ref{thm:Helmholtz} as the primary device.  These results are
contained within the more general Limiting Absorption Principle of Ionescu and 
Schlag~\cite{IoSc06}, and serve as an instructive special case.

The discussion of perturbed Schr\"odinger operators naturally raises the
question of whether there is a similar existence theorem for the
Helmholtz equation $(-\Delta + V - 1)u = f$.  Using resolvent
identities we are able to prove the following.

\begin{theorem} \label{thm:Helmholtz2}
Let $n \ge 3$, $p = \frac{2n+2}{n+5}$, and suppose 
$V \in L^{\frac{n+1}{2}}(\R^n)$. There is a subspace
$X \subset L^p(\R^n)$, isomorphic to the subspace 
$X_0 \subset L^p(\R^n)$ of functions whose Fourier transform vanishes
on the unit sphere, with the following property: For each $f \in X$
there exists a unique $u \in L^2(\R^n)$
such that $(-\Delta +V -1)u = f$.  Furthermore, $\norm[u][2] \leq C_{n}\norm[f][p]$.
\end{theorem}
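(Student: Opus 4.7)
The plan is to construct a bounded linear map $\Phi\colon X_0 \to L^p(\R^n)$ whose image will serve as the space $X$ and whose (partial) inverse, composed with $R_0$, produces the desired solution operator. Let $R_0\colon X_0 \to L^2$ denote the unperturbed solution map from Theorem~\ref{thm:Helmholtz}. Writing a candidate solution as $u = R_0 g$ with $g \in X_0$, one computes
\begin{equation*}
(-\Delta + V - 1) R_0 g \;=\; g + V R_0 g \;=:\; \Phi(g).
\end{equation*}
Boundedness of $\Phi$ follows from the H\"older estimate $\|\Phi g\|_p \le \|g\|_p + \|V\|_{\frac{n+1}{2}}\|R_0 g\|_2 \les \|g\|_p$, using the exponent identity $\frac{1}{p} = \frac{1}{2} + \frac{2}{n+1}$. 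Setting $X := \Phi(X_0)$, the solution for $f \in X$ will be $u = R_0 \Phi^{-1}(f)$, with $\|u\|_2 \les \|f\|_p$ automatic once $\Phi$ is shown to be a Banach-space isomorphism onto $X$.

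To prove injectivity of $\Phi$, suppose $\Phi(g) = 0$ and set $u = R_0 g \in L^2$. Then $u$ is an $L^2$-eigenfunction of $-\Delta + V$ at energy $1$, and the bootstrapping argument developed in Section~\ref{sec:embedded}---which substitutes Theorem~\ref{thm:Helmholtz} for Agmon's Theorem~\ref{thm:Agmon} in the standard scheme, and uses only $V \in L^{\frac{n+1}{2}}$---forces $u \equiv 0$, whence $g = -Vu = 0$. Applied to the difference of two solutions, this same argument also gives uniqueness of $u$ for each $f \in X$.

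For closed range I would invoke a Fredholm-type argument. Decompose $\Phi = \iota + V R_0$, where $\iota\colon X_0 \hookrightarrow L^p$ is the isometric inclusion; the subspace $X_0$ is closed in $L^p$ as the kernel of the continuous Stein--Tomas restriction map $L^p \to L^2(\sph^{n-1})$, available since $p \le \frac{2n+2}{n+3}$. As $\iota$ is upper semi-Fredholm with trivial kernel, a compact perturbation $V R_0$ preserves that property, and the injectivity of $\Phi$ then upgrades to the bounded-below estimate $\|g\|_p \les \|\Phi g\|_p$ via the standard compactness-subsequence argument. To verify compactness of $V R_0\colon X_0 \to L^p$, approximate $V$ in $L^{\frac{n+1}{2}}$ by bounded, compactly supported $V_k$; the tail $(V - V_k) R_0$ has operator norm $\les \|V - V_k\|_{\frac{n+1}{2}} \to 0$, so it suffices to show each $V_k R_0$ is compact. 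Splitting the Fourier variable into a neighborhood of $\sph^{n-1}$ (treated via the $L^2$ control on $R_0 g$ together with the compact spatial support of $V_k$, after a suitable smoothing) and its complement (where $(|\xi|^2-1)^{-1}$ is smooth with two-derivative decay, giving $W^{2,p}_{\mathrm{loc}}$ regularity for $R_0 g$), the required compactness follows from local Rellich--Kondrachov.

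The main obstacle is this last compactness step: the singularity of $(|\xi|^2-1)^{-1}$ on $\sph^{n-1}$ precludes a global regularity gain for $R_0 g$, so the argument must leverage both the $L^2$-trace vanishing built into $X_0$ and the spatial localization of the approximating potentials $V_k$ to extract compactness in a neighborhood of the frequency sphere.
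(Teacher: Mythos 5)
Your overall scheme is the same as the paper's: factor the equation as $-\Delta+V-1 = (I+VR_0^+(1))(-\Delta-1)$, invoke Theorem~\ref{thm:Helmholtz} to get $R_0^+(1):X_0\to L^2$ bounded, show $\Phi = J+VR_0^+(1):X_0\to L^p$ is injective with closed range via compactness of $VR_0^+(1)$, and finish with the open mapping/closed graph theorem. The exponent arithmetic, the Stein--Tomas characterization of $X_0$ as a closed subspace, and the normalization-subsequence argument for closed range all match the paper.

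However, your injectivity step contains a genuine gap. You assert that once $u=R_0g\in L^2$ is an eigenfunction of $-\Delta+V$ at energy $1$, ``the bootstrapping argument developed in Section~\ref{sec:embedded}\ldots forces $u\equiv 0$.'' It does not. That bootstrap only upgrades a putative resonance function $g\in L^{\frac{2n+2}{n-1}}$ to a genuine $L^2$ eigenfunction (by showing $\widehat{Vg}$ vanishes on the sphere and then applying Theorem~\ref{thm:Helmholtz}); it provides additional regularity, not triviality. In your setting you already \emph{have} $u\in L^2$, so the bootstrap adds nothing. What eliminates the eigenfunction is the theorem of Koch and Tataru~\cite{KoTa06} that real potentials in $L^{\frac{n+1}{2}}(\R^n)$ admit no embedded eigenvalues --- a deep Carleman-estimate result entirely separate from anything Theorem~\ref{thm:Helmholtz} supplies. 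The paper invokes it explicitly precisely at this point (``It is injective by the result in~\cite{KoTa06}''), and Section~\ref{sec:embedded} itself is careful to distinguish the two steps. You need to cite this theorem directly; without it the injectivity of $\Phi$, and hence the entire Fredholm machinery, is unproven.

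On the compactness of $VR_0^+(1):X_0\to L^p$, you are candid that you do not have a complete proof and you sketch a frequency decomposition; the paper is also terse here, asserting compactness for compactly supported smooth $V$ and passing to the limit. Your acknowledgment of the difficulty is appropriate, but note that the paper does not actually carry out the $W^{2,p}_{\mathrm{loc}}$ / Rellich argument you propose, so this portion of your write-up is speculative rather than a reproduction of a known argument. It would be worth either completing it or flagging it as conditional.
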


\begin{remark}
The integrability condition $V \in L^\frac{n+1}{2}(\R^n)$ appears as a sharp
threshold for short-range potentials in both~\cite{IoSc06} and~\cite{KoTa06}.
For $1 \leq p < \frac{2n+2}{n+5}$ a stronger set of constraints on $V$
may be required.
\end{remark}

\section{Proof of Proposition~\ref{prop:main}}
The proof of Proposition~\ref{prop:main} mirrors that of the
sharp Stein-Tomas restriction theorem.  We follow the exposition
in~\cite{ScMu13} most closely.  

Let $\sigma_r$ denote the surface measure on $r\sph^{n-1}$
inherited from its embedding in $\R^n$.  
The main estimate will be a bound on $\la K_1^\eps * f, f\ra$, where
\begin{equation} \label{eq:K1eps}
K_1^\eps = \int_\frac12^\frac32 
\frac{\sigmacheck_r - \sigmacheck_1}{((1-r^2)^2+\eps^2)^{\alpha}}\,dr
= \int_{-\frac12}^\frac12
\frac{\sigmacheck_{1+s} - \sigmacheck_1}{(s^2(2+s)^2 + \eps^2)^{\alpha}}
\,ds.
\end{equation}
This is almost equal to the lefthand expression in~\eqref{eq:main}, with the
only discrepancy arising in the coefficient of $\la \sigmacheck_1 * f, f\ra$.
More precisely,
\begin{align*}
[\text{Left side of }\eqref{eq:main}] - \la K_1^\eps * f,  f\ra
 &= \Big(\int_\frac12^\frac32 ((1-r^2)^2+\eps^2)^{-\alpha}\,dr - 
\frac{C_\alpha}{\eps^{2\alpha-1}}\Big)\la\sigmacheck_1*f,f\ra \\
&= O(1)\la\sigmacheck_1 * f,f\ra.
\end{align*}
Since $f \in L^p(\R^n)$
with $p \leq \frac{2n+2}{n+3}$, the $O(1)$ term can be absorbed into the right
side of~\eqref{eq:main} by the Stein-Tomas theorem.

The integrand in~\eqref{eq:K1eps}
may become highly singular at $s=0$ as $\eps$ decreases.
However the denominator is approximately an even function of $s$, 
while the leading order behavior of the numerator is an odd function.
To be precise, let
\begin{align*}
A_{even}(s) &= \frac12 \left(\frac{1}{(s^2(2+s)^2 + \eps^2)^\alpha} + 
\frac{1}{(s^2(2-s)^2+ \eps^2)^\alpha}\right) \\
A_{odd}(s) &= \frac12\left(\frac{1}{(s^2(2+s)^2 + \eps^2)^\alpha} - 
\frac{1}{(s^2(2-s)^2+ \eps^2)^\alpha}\right).
\end{align*}
Then
\begin{equation} \label{eq:Kevenodd}
K_1^\eps = \frac12 
\int_{-\frac12}^\frac12 \Big(A_{even}(s)(\sigmacheck_{1+s} - 2\sigmacheck_1 +
\sigmacheck_{1-s}) + A_{odd}(s)(\sigmacheck_{1+s} - \sigmacheck_{1-s})\Big)\,ds
\end{equation}
The main size bounds for $A_{even}$ and $A_{odd}$ are:
\begin{equation}\label{eq:Aevenodd}
|A_{even}(s)| \les s^{-2\alpha}, \quad |A_{odd}(s)| \les s^{1-2\alpha}
\quad \text{uniformly in } \eps > 0.
\end{equation}

It is a common practice to estimate the inverse Fourier transform of a
surface measure by decomposing the surface into smaller regions where
stationary phase methods can be applied.
Consider a conical decomposition $\sum_{j=1}^n \eta_j(\frac{\xi}{|\xi|}) = 1$
where each smooth cutoff $\eta_j$
is supported in the region where $|\xi_j| \sim |\xi|$.
One may symmetrize so that each $\eta_j$ is invariant under
reflections across any one of the coordinate planes.
 Then~\eqref{eq:K1eps}
splits into a directional sum
\begin{equation} \label{eq:K1decomp}
K_1^\eps = 
\sum_{j=1}^n \int_{-\frac12}^\frac12
\frac{(\eta_j\sigma_{1+s})\check{\!\phantom{i}} 
- (\eta_j\sigma_1)\check{\!\phantom{i}}}{(s^2(2+s)^2 + \eps^2)^{\alpha}} \,ds.
\end{equation}
Let $K_2^\eps$ denote the $j=n$ term of this sum and write coordinates in $\R^n$ as
$(x', x_n)$ or $(\xi', \xi_n)$.  We will make further
estimates on $K_2^\eps$ as a representative element.

Inside the support of $\eta_n\sigma_r$, the relationship
$\xi_n = \pm(r^2 - |\xi'|^2)^{1/2}$ expresses $\xi_n$ as a smooth function
of $\xi'$ on each hemisphere.  Then the inverse Fourier transform of
$\eta_n\sigma_r$ takes the form
\begin{equation*}
(\eta_n\sigma_r)\check{\!\phantom{i}}(x',x_n) = (2\pi)^{-n}\sum_\pm
\int_{\R^{n-1}}
\frac{r\,\eta_n\big(\frac{\xi'}{r}, \pm\frac{\sqrt{r^2-|\xi'|^2}}{r}\big)
}{\sqrt{r^2-|\xi'|^2}} e^{i(x'\cdot\xi' \pm x_n\sqrt{r^2-|\xi'|^2})}\,d\xi'.
\end{equation*}
For $\frac12 < r <\frac32$, the Hessian of the phase function is bounded below
by $x_n$ times the $(n-1)$-identity matrix and the initial fraction is a
uniformly smooth function.  This leads to the pointwise bound
\begin{equation*}
|(\eta_n\sigma_r)\check{\!\phantom{i}}(x',x_n)| \les (1+ |x_n|)^{\frac{1-n}{2}}.
\end{equation*}
for $r$ in this range.  Furthermore one can differentiate with respect to $r$
under the integral sign to obtain bounds
\begin{equation}
|\partial_r^k(\eta_n\sigma_r)\check{\!\phantom{i}}(x',x_n)| \les 
(1+|x_n|)^{\frac{1-n}{2}+k}.
\end{equation}

Taylor remainder estimates then imply that
\begin{align*}
|(\eta_n\sigma_{1+s})\check{\!\phantom{i}}(x) -
(\eta_n\sigma_{1-s})\check{\!\phantom{i}}(x)| 
&\les \min(|s|(1+|x_n|),1)(1+|x_n|)^{\frac{1-n}{2}}, \\
|(\eta_n\sigma_{1+s})\check{\!\phantom{i}}(x) 
- 2(\eta_n\sigma_1)\check{\!\phantom{i}}(x) 
+ (\eta_n\sigma_{1-s})\check{\!\phantom{i}}(x)| 
&\les \min(s^2(1+|x_n|)^2,1)(1+|x_n|)^{\frac{1-n}{2}}
\end{align*}
while $|s| < \frac12$.  Plugging these and~\eqref{eq:Aevenodd} into the
appropriately modified version of~\eqref{eq:Kevenodd}, one concludes that
\begin{equation}
|K_2^\eps(x)| \les (1+|x_n|)^{\frac{4\alpha-1-n}{2}}.
\end{equation}

In other words,  for a fixed 
choice of $x_n$, the restricted convolution operator
\begin{equation*}
Tg(x') = \int_{\R^{n-1}} K_2^\eps(x' - y', x_n)g(y') \, dy'
\end{equation*}
maps $L^1(\R^{n-1})$ to $L^\infty(\R^{n-1})$ with operator norm
controlled by $(1+|x_n|)^{\frac{4\alpha-1-n}{2}}$.

One can also determine the size of $T$ as an operator on $L^2(\R^{n-1})$.
This bound is given by the essential supremum of the $x'$-Fourier transform of the
convolution kernel $K_2^\eps$.  
Since $K_2^\eps$ is a superposition of the inverse Fourier transforms
of $(\eta_n \sigma_s)$ as in~\eqref{eq:K1decomp}, the $x'$-Fourier
transform reverses the procedure in all except the $x_n$ variable.
More precisely,
\begin{equation*}
\int_{\R^{n-1}} e^{-i\xi'\cdot x'}K_2^\eps(x',x_n)\,dx'
= \frac{1}{2\pi}\int_{\{\xi'\}\times\R}\int_{-\frac12}^{\frac12} 
\frac{e^{i x_n\xi_n}(\sigma_{1+s} - \sigma_1)\eta_n}{
(s^2(2+s)^2 + \eps^2)^{\alpha}} \,ds\,d\xi_n.
\end{equation*}

  If the integral over $s$ is split into even and odd contributions
as in~\eqref{eq:Kevenodd}, the result is
\begin{align*}
\int_{\R^{n-1}} e^{-i\xi'\cdot x'} &K_2^\eps(x',x_n)\,dx' \\
&= \begin{aligned}[t]
\frac{1}{4\pi}\int_{-\frac12}^{\frac12} \bigg( 
&A_{even}(s)  \int_{\{\xi'\}\times \R} 
e^{i x_n\xi_n}(\sigma_{1+s} - 2\sigma_1 + \sigma_{1-s})\eta_n\,d\xi' \\
+\ &A_{odd}(s) \int_{\{\xi'\}\times\R}e^{ix_n\xi_n}(\sigma_{1+s} - \sigma_{1-s})
\eta_n\,d\xi'\bigg) \,ds
\end{aligned}
\end{align*}

For a fixed choice of $\xi' \in \R^{n-1}$ and radius $r>0$, the line
$\{\xi'\} \times \R$ intersects the support of $\sigma_r$ 
only when $\xi_n = \pm\sqrt{r^2 - |\xi'|^2}$.  Thus for $|\xi'|<r$
\begin{equation*}
\int_{\{\xi'\}\times\R} e^{ix_n\xi_n}\sigma_r\eta_n\,d\xi'
= \frac{2\cos\big(x_n\sqrt{r^2-|\xi'|^2}\big)\,
\eta_n\Big({\textstyle \frac{\xi'}{r}},\sqrt{1-(|\xi'|/r)^2}\Big)}{
\sqrt{1-(|\xi'|/r)^2}}
\end{equation*}
and is zero otherwise.  The denominator accounts for the angle of
intersection between the line and surface.  
It is bounded away from zero within the support
of $\eta_n$, so the integral expression is a smooth bounded
function of $\xi'$ and $r$.
Within the range $\frac12 < r < \frac32$, its first two 
derivatives with respect to $r$ are bounded by $(1+|x_n|)$ and 
$(1+|x_n|)^2$
respectively.  It follows that
\begin{align*}
\Big| \int_{\R^{n-1}} e^{-i\xi'\cdot x'} K_2^\eps(x',x_n)\,dx' \Big|
&\les \int_{-\frac12}^{\frac12} 
\begin{aligned}[t] A_{even}(s) &\max(s^2(1+|x_n|)^2,1)\\
 &+ A_{odd}(s) \max(|s|(1+|x_n|),1)\,ds \end{aligned}\\
&\les (1+|x_n|)^{2\alpha-1}
\end{align*}
and therefore $T$ is a bounded operator on $L^2(\R^{n-1})$ with norm comparable
to $|x_n|^{2\alpha-1}$.  Interpolating with the previous $L^1\to L^\infty$ bound
shows that
\begin{equation*}
\norm[Tg][L^{p'}(\R^{n-1})] \les (1+|x_n|)^{2\alpha + \frac{n-3}{2} + \frac{1-n}{p}}
\norm[g][L^p(\R^{n-1})], \quad 1 \leq p \leq 2.
\end{equation*}

Returning to the action of $K_2^\eps$ on functions in $\R^n$,
these estimates imply that
\begin{align}
\bignorm[K_2^\eps * f][p'] &\les 
\Bignorm[{\int_{-\infty}^\infty (1+|x_n-y_n|)^{2\alpha + \frac{n-3}{2} + \frac{1-n}{p}}
\norm[f(\,\cdot\,,y_n)][L^p(\R^{n-1})]\,dy_n}][L^{p'}(\R)] \notag \\
&\les \norm[f][p]
\end{align}
provided $2\alpha + \frac{n-3}{2} + \frac{1-n}{p} \leq \frac{2}{p} - 2$, or more simply
$1 \leq p \leq \frac{2n+2}{4\alpha+n+1}$.  The last step is a restatement of the
Hardy-Littlewood-Sobolev inequality in one dimension.

Summing over the $n$ pieces of the conical decomposition concludes the proof.

\section{Application to embedded resonances of $-\Delta + V$}
\label{sec:embedded}
Statements like Theorem~\ref{thm:Helmholtz} are useful for constraining the
spectral measure of Schr\"odinger operators $-\Delta + V$ with a scalar
perturbation $V \in L^r(\R^n)$.  We present a tidy application here; one can
find a more extensive set of tools and results in~\cite{IoSc06}.

Suppose $n \geq 3$ and $V \in L^{\frac{n+1}{2}}(\R^n)$ is real-valued.
It is known that
the free resolvent
$R_0^+(\lambda) = \lim_{\eps\to 0^+} (-\Delta -(\lambda+i\eps))^{-1}$
maps $L^{\frac{2n+2}{n+3}}(\R^n)$ to $L^{\frac{2n+2}{n-1}}(\R^n)$ for
each $\lambda > 0$~\cite{KeRuSo87}.  One may present the resolvent of the 
perturbed operator $H = -\Delta + V$ using identities such as
\begin{equation*}
R_V^+(\lambda) := \lim_{\eps \to 0^+} (H - (\lambda + i\eps))^{-1}
= (I + R_0^+(\lambda)V)^{-1} R_0^+(\lambda).
\end{equation*}

The mapping bounds for $R_0^+(\lambda)$ extend naturally to the
pertubed resolvent $R_V^+(\lambda)$ provided there exists a suitable
operator inverse for $(I + R_0^+(\lambda)V)$.  Under the given condition
$V \in L^{\frac{n+1}{2}}(\R^n)$, this is a compact perturbation of the identity
on $L^{\frac{2n+2}{n-1}}(\R^n)$.  By the Fredholm alternative, it only fails to
be invertible if there exists a function $g \in L^{\frac{2n+2}{n-1}}$ such that
$g = -R_0^+(\lambda)Vg$.

Such a function also has the property $(R_0^+(\lambda)Vg, Vg) = -(g, Vg) \in \R$,
where $(\,\cdot\,,\,\cdot\,)$ is the sesquilinear pairing between 
$L^{\frac{2n+2}{n-1}}$ and its dual.  The imaginary part of the left-hand pairing
is equal to 
$c \lambda^{-1/2}\norm[(Vg)^\wedge][L^2(d\sigma_{\sqrt{\lambda}})]^2$,
hence the Fourier transform of $Vg$ vansihes on the sphere radius $\sqrt{\lambda}$.
Furthermore, g is a solution of the Helmholtz equation
$(-\Delta - \lambda)g = -Vg$.

The statement of Theorem~\ref{thm:Helmholtz} can be modified to accommodate
any operator $-\Delta -\lambda$, $\lambda > 0$, by conjugating with dilations of 
order $\sqrt{\lambda}$.  Write $V = V_1 + V_2$, where $V_1$ is bounded and 
compactly supported, and $\norm[V_2][\frac{n+1}{2}] < \delta$ for a quantity
$\delta>0$ to be chosen in a moment.  We have
\begin{equation}
\norm[g][2] \leq C_{n,\lambda}\norm[Vg][\frac{2n+2}{n+5}]
\leq C_{n,\lambda}\big(\norm[V_1][\frac{n+1}{3}]\norm[g][\frac{2n+2}{n-1}]
 + \delta \norm[g][2]\big).
\end{equation}
If $C_{n,\lambda}\delta < \frac12$, the last term can be moved to the left side
of the inequality so that $\norm[g][2] \les \norm[g][\frac{2n+2}{n-1}]$.

The conclusion is that resonances cannot be embedded into the continuous
spectrum of $H$; only true eigenfunctions in $L^2$ are possible.
However it is also known that embedded eigenvalues do not exist if the
potential is real and belongs to $L^{\frac{n+1}{2}}(\R^n)$~\cite{KoTa06},
so in fact the spectrum of $H$ is purely absolutely continuous.

\section{Perturbed Helmholtz equation}

Theorem~\ref{thm:Helmholtz} admits a relatively easy
extension to the equation $(-\Delta + V - 1)u = f$.
Factorize the perturbed Helmholtz operator  as
\begin{equation*}
-\Delta + V - 1 = (I + VR_0^+(1))(-\Delta - 1)
\end{equation*}
where $R_0^+(1) = \lim_{\eps \to 0^+}(-\Delta - (1+i\eps))^{-1}$.
In this case the choice of resolvent continuations is unimportant,
as both $R_0^+(1)$ and $R_0^-(1)$ act the same when applied to
functions in the range of $-\Delta - 1$.
Then there should exist $L^2$ solutions of $(-\Delta + V - 1)u = f$
whenever $f = g + VR_0^+(1)g$  and the unperturbed
equation $(-\Delta -1)u = g$ has solutions in $L^2(\R^n)$.

Let $X_0$ be the subspace of functions in $L^p(\R^n)$ whose
Fourier transform vanishes on the unit sphere, as defined in the statement
of Theorem~\ref{thm:Helmholtz2}.  For $p=\frac{2n+2}{n+5}$, 
Theorem~\ref{thm:Helmholtz} indicates that the latter problem admits solutions
precisely when $g \in L^p(\R^n)$ also belongs to $X_0 \subset L^p(\R^n)$.  
The substance of Theorem~\ref{thm:Helmholtz2} is that the correspondence
between $g$ and $f$ is an isomorphism of subspaces of $L^p(\R^n)$.
This statement is proved below.

\begin{proposition}
Assume the conditions of Theorem~\ref{thm:Helmholtz2}, namely that
$p = \frac{2n+2}{n+5}$ and $V \in L^{\frac{n+1}{2}}(\R^n)$.
Let $J: X_0 \to L^p(\R^n)$ be the inclusion map.  The linear operator
$J + VR_0^+(1): X_0 \to L^p(\R^n)$ is an isomorphism onto its range.
\end{proposition}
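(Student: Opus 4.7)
My strategy is to view $T := J + V\Res(1)$ as ``identity plus compact'' (relative to the ambient inclusion $X_0 \subset L^p$) and to deduce the isomorphism statement by a standard Fredholm-alternative dichotomy. The three ingredients are: (a) boundedness of $T : X_0 \to L^p$; (b) compactness of $V\Res(1) : X_0 \to L^p$; (c) injectivity of $T$. From these a normalized-sequence argument yields $\|Tg\|_p \gtr \|g\|_p$, which gives the isomorphism. Boundedness is immediate from Theorem~\ref{thm:Helmholtz}: for $g \in X_0$ the function $u := \Res(1)g$ lies in $L^2$ with $\|u\|_2 \les \|g\|_p$, and H\"older at exponents $(n+1)/2$ and $2$ then gives $\|Vu\|_p \les \|V\|_{(n+1)/2}\|u\|_2$ precisely at $p = (2n+2)/(n+5)$.

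The analytic heart is compactness of $V\Res(1) : X_0 \to L^p$. I would approximate $V$ in $L^{(n+1)/2}$ by $V_k \in C_c^\infty(\R^n)$; the same H\"older step gives $\|(V-V_k)\Res(1)\|_{X_0 \to L^p} \to 0$, and since the class of compact operators is norm-closed it suffices to treat each $V_k$. For such a $V_k$, a bounded sequence $\{g_n\} \subset X_0$ produces $u_n := \Res(1)g_n$ bounded in $L^2$ satisfying $-\Delta u_n = g_n + u_n$ uniformly in $L^p_{\mathrm{loc}}$ (since $L^2$ embeds locally into $L^p$). Interior $L^p$ elliptic regularity upgrades this to a uniform $W^{2,p}$ bound on any neighborhood of $\mathrm{supp}\, V_k$, and Rellich--Kondrachov extracts a subsequence converging in $L^p$ on that neighborhood; multiplying by the bounded, compactly supported $V_k$ then yields the required $L^p(\R^n)$-convergence of $V_k u_n$.

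For injectivity, suppose $Tg = 0$ with $g \in X_0$ and set $u := \Res(1)g \in L^2$. Then $g = -Vu$ and $(-\Delta + V - 1)u = (-\Delta-1)u + Vu = g + Vu = 0$, so $u$ is an $L^2$ embedded eigenfunction of $-\Delta + V$ at energy $\lambda = 1$. For real $V \in L^{(n+1)/2}(\R^n)$, the absence of such eigenfunctions~\cite{KoTa06} forces $u = 0$ and thus $g = 0$. This is the main obstacle: injectivity genuinely depends on the absence-of-embedded-eigenvalues theorem, and without realness of $V$ the kernel can in fact be nonzero. Indeed, given any $L^2$ solution $u$ of $(-\Delta + V - 1)u = 0$, the function $g := -Vu = (-\Delta - 1)u$ has Fourier transform $\hat g(\xi) = (|\xi|^2 - 1)\hat u(\xi)$, which vanishes on $\sph^{n-1}$ in the $L^2$-trace sense, placing $g$ in $X_0$ and in $\ker T$.

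Putting it together: if the lower bound $\|Tg\|_p \gtr \|g\|_p$ failed, there would exist $g_n \in X_0$ with $\|g_n\|_p = 1$ and $Tg_n \to 0$; compactness extracts a subsequence with $V\Res(1)g_{n_k} \to w$ in $L^p$, whence $g_{n_k} = Tg_{n_k} - V\Res(1)g_{n_k} \to -w$ in $L^p$ with $\|w\|_p = 1$ and $-w \in X_0$ (a closed subspace of $L^p$ by continuity of the Stein--Tomas trace map at $p = (2n+2)/(n+5)$). Applying $T$ gives $T(-w) = 0$, contradicting injectivity. Hence $T$ is bounded below, i.e.\ an isomorphism onto its range, which is what Theorem~\ref{thm:Helmholtz2} requires.
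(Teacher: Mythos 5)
Your proof is correct and follows essentially the same Fredholm-alternative strategy as the paper: boundedness from Theorem~\ref{thm:Helmholtz} plus H\"older, injectivity via the absence of embedded eigenvalues from~\cite{KoTa06}, compactness of $V\Res(1)$, and a normalized-sequence contradiction. The paper's treatment of compactness is considerably sketchier than yours (it merely asserts compactness for smooth compactly supported $V$ and passes to the limit), so your interior elliptic regularity $+$ Rellich--Kondrachov argument supplies a detail the paper leaves implicit; and you package the endgame as a direct lower bound $\norm[Tg][p] \gtr \norm[g][p]$ rather than the paper's closed-range-then-closed-graph phrasing, which are equivalent routes to ``isomorphism onto its range.''
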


\begin{proof}
The fact that is a bounded operator is a direct consequence of
Theorem~\ref{thm:Helmholtz}, which effectively states that
$R_0^+(1)$ is a bounded map from $X_0$ to $L^2(\R^n)$.
It is injective by the result in~\cite{KoTa06}, as 
$R_0^+(1)g$ would be an $L^2$ eigenfunction of $-\Delta + V - 1$
for any $g$ in the nullspace of $J + VR_0^+(1)$.

In fact $VR_0^+(1)$ is a compact operator from $X_0$ into 
$L^p(\R^n)$.  For smooth compactly supported $V$ it acts
compactly on the larger domain $L^p(\R^n)$. Approximating 
$V \in L^{\frac{n+1}{2}}(\R^n)$ preserves compactness 
of $VR_0^+(1)$ over the restricted domain $X_0$.

The argument that $(J + VR_0^+(1))X_0 \subset L^p(\R^n)$ is closed is nearly
identical to the analogous statement in the Fredholm Alternative.
Let $f_n = (J + VR_0^+(1))g_n$ be a sequence converging to $f \in L^p$.
If $g_n$ has a bounded subsequence, then by compactness
$VR_0^+(1)g_n$ has a convergent subsqeuence and so does
$g_n = f_n - VR_0^+(1)g_n$.  The limit point $g \in X_0$ satisfies
$(J + VR_0^+(1))g = f$.

If $\lim_{n \to \infty} \norm[g_n][p] = +\infty$, consider the normalized
functions $\tilde{g}_n = g_n/\norm[g_n][p]$.  This sequence satisfies
$(J + VR_0^+(1))\tilde{g}_n \to 0$, and by compactness there is a
convergent subsequence of $VR_0^+(1)\tilde{g}_n$ with limit $-g$.
Then the same subsequence of $\tilde{g}_n$ converges to $g$, which
has unit norm and belongs to the nullspace of $J + VR_0^+(1)$.
That would violate the injectivity property of the map.

Having ruled out unbounded (subsequences of) $g_n$,
it follows that $f \in (J + VR_0^+(1))X_0$ as in the first case, 
making the range closed.  By the closed graph thoerem, $J + VR_0^+(1)$
is then an isomorphism onto its range.
\end{proof}

\section{Extensions via Interpolation} \label{sec:interpolation}
The subspace of $L^p$ consisting of functions whose Fourier transform
vanishes on the unit sphere is not particularly well suited to interpolation.
The Fourier-vanishing condition not preserved by
lattice operations or by the complex-analytic families used in the
Riesz-Thorin theorem.  As a futher impediment, it is not obvious that
one can approximate each element by a sequence of simple functions
(or compactly supported functions, or Schwartz functions)
whose Fourier transforms also vanish on the sphere.

We able to prove Theorem~\ref{thm:B-Rextension}
via complex interpolation of
operators and some careful avoidance of the above obstacles.
Suppose $f$ and $g$ are simple functions with compact support.
Let $\tilde{S}^{z}$ be the ``analytic"  Bochner-Riesz operators
defined by
\begin{equation*}
\tilde{S}^z = \frac{1}{\Gamma(z+1)}S^z
\end{equation*}
for real-valued $z > -1$, and by analytic continuation to $z \in \Compl$.

The key observation is that for ${\rm Re}\, z > -2$, and for functions
whose Fourier transform vanishes on the unit sphere,
$\Gamma(z + 1)\tilde{S}^{z}f = S^z f$ (The singularity at $z = -1$ 
is removable in this case).
Proposition~\ref{prop:main} establishes the same observation about
``two-sided" Bochner-Riesz operators over the larger range
${\rm Re}\,z > -3$.

It is true by construction that the function $\la \tilde{S}^z f, g\ra$
is holomorphic in $z$ for any pair of simple functions $f$ and $g$.
This remains true by uniform convergence in
the halfplane ${\rm Re}\, z > -\frac12 - \frac{2\beta n}{n+1}$ if we
take limits to a generic element $f \in L^{\frac{2n+2}{n+1+4\beta}}$.
Then
\begin{equation*}
G(z) := \Gamma(z+1)\la \tilde{S}^z f,g\ra
\end{equation*}
is meromorphic over the same domain, with residues at the negative integers
determined by $\la \tilde{S}^{-k}f,g\ra$.  Since $\tilde{S}^{-1}$ agrees (up to
a scalar multiple) with convolution against $\check{\sigma}_1$, if we further
assume that $\hat{f}$ vanishes on the unit sphere then in fact the singularity
of $G(z)$ at $ z=-1$ is removable.
The slightly modified function 
\begin{equation*}
\tilde{G}(z) := (z+2)\Gamma(z+1)\la \tilde{S}^zf,g\ra = (z+2)G(z)
\end{equation*}
is meromorphic with poles at the negative integers $k \leq -3$.

Assuming once again that $\hat{f}$ vanishes on the unit sphere,
Theorem~\ref{thm:B-R} provides a bound on the line $z = -\beta + i\mu$,
\begin{equation}
\bignorm[\Gamma(1-\beta+i\mu)\tilde{S}^{-\beta+i\mu}f][2]
\les \norm[f][\frac{2n+2}{n+1+4\beta}].
\end{equation}
The constant does not depend on $\mu$ because any one of the 
Fourier multipliers $(1-|\xi|^2)^{i\beta}$ is an isometry on $L^2$.
Therefore
\begin{equation*}
|\tilde{G}(-\beta + i\mu)| \les (1+|\mu|) 
\norm[f][\frac{2n+2}{n+1+4\beta}] \norm[g][2].
\end{equation*}

On the line $z = -2\beta + i\mu$,
we need the following estimates.

\begin{prop} \label{prop:complexB-R}
Let $\frac12 < \beta < \frac32$ and $\beta \leq \frac{n+1}{4}$.  The inequality
\begin{equation}
\bignorm[(2-2\beta+i\mu)\Gamma(1-2\beta + i\mu) \tilde{S}^{-2\beta+i\mu}f 
][\frac{2n+2}{n+1-4\beta}]
\les (1+|\mu|)\norm[f][\frac{2n+2}{n+1+4\beta}]
\end{equation}
holds uniformly for all $\mu \in \R$ and all $f \in L^{\frac{2n+2}{n+1+4\beta}}(\R^n)$.

\end{prop}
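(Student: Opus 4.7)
The plan is to adapt the analysis in the proof of Proposition~\ref{prop:main} to establish an $L^p \to L^{p'}$ bound for the analytically continued one-sided Bochner-Riesz operator on the vertical line ${\rm Re}\,z = -2\beta$, where $p = \frac{2n+2}{n+1+4\beta}$ and (by duality) $p' = \frac{2n+2}{n+1-4\beta}$.

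First I would realize $(z+2)\Gamma(z+1)\tilde{S}^z$ as convolution with the analytic continuation of
\[
(z+2)\int_0^1 (1-r^2)^z\, \sigmacheck_r * f\ r^{n-1}\,dr,
\]
initially defined for ${\rm Re}\,z > -1$.  A first integration by parts in $r$ extends the range to ${\rm Re}\,z > -2$ with a boundary contribution proportional to $\sigmacheck_1 * f / (z+1)$, and a second integration by parts extends it to ${\rm Re}\,z > -3$ at the cost of a term proportional to $\partial_r(\sigmacheck_r)|_{r=1} * f / (z+2)$.  The prefactor $(z+2)$ cancels exactly this $z=-2$ pole, so the operator is holomorphic and well-defined along ${\rm Re}\,z = -2\beta$ for every $\beta \in (\tfrac12,\tfrac32)$.

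Next, I would apply the conical decomposition $\sum \eta_j = 1$ from the proof of Proposition~\ref{prop:main} and reduce to bounding a single directional piece $K_2^z$, with coordinates $x = (x', x_n)$ and $\eta_n$ supported where $|\xi_n| \sim |\xi|$.  Stationary-phase expansions of $\sigmacheck_r$ in $x_n$ combined with the even/odd symmetrization in $s = r-1$, together with bounds $|A^z_{even}(s)| \les (1+|\mu|)\,|s|^{-2\beta}$ and $|A^z_{odd}(s)| \les (1+|\mu|)\,|s|^{1-2\beta}$ on the complex-weighted analogues of $A_{even}$ and $A_{odd}$, should yield
\[
|K_2^z(x',x_n)| \les (1+|\mu|)(1+|x_n|)^{2\beta-(n+1)/2}, \quad
\Bigl|\!\int_{\R^{n-1}}\!\! e^{-i\xi'\cdot x'} K_2^z\,dx'\Bigr| \les (1+|\mu|)(1+|x_n|)^{2\beta-1}.
\]
Riesz-Thorin interpolation in the transverse $(n-1)$ variables between these $L^1 \to L^\infty$ and $L^2 \to L^2$ bounds, followed by one-dimensional Hardy-Littlewood-Sobolev in $x_n$, then gives the claimed $L^p \to L^{p'}$ estimate by the same arithmetic of exponents as in Proposition~\ref{prop:main} with $\beta$ playing the role of $\alpha$.

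The main obstacle is tracking the $\mu$-dependence cleanly.  The normalization $\tilde{S}^z = S^z/\Gamma(z+1)$ is expressly designed so that the factor $|\Gamma(1-2\beta+i\mu)|^{-1} \sim |\mu|^{2\beta-1/2}\,e^{\pi|\mu|/2}$ (Stirling) cancels the exponential growth of the natural Bessel-type coefficients in the kernel, leaving only polynomial behaviour in $|\mu|$.  Verifying that the surviving polynomial is in fact the linear $(1+|\mu|)$ claimed in the statement -- attributable to the $(z+2)$ prefactor together with bounded-in-$\mu$ Riesz-Thorin and HLS constants -- is the delicate final bookkeeping.
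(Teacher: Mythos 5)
Your overall strategy --- realize $(z+2)\Gamma(z+1)\tilde{S}^z$ as an analytically continued superposition of $\sigmacheck_r*f$, extend past ${\rm Re}\,z = -1,-2$ by integration by parts, then run the conical decomposition and $L^1\to L^\infty$ / $L^2\to L^2$ interpolation plus one-dimensional HLS machinery from Proposition~\ref{prop:main} --- is a plausible route, and genuinely different from what the paper does. The paper instead invokes the classical Bessel-function asymptotic
$\tilde{S}^z(|x|)\sim C_n |x|^{-\frac{n+1}{2}-z}\cos\big(|x|-\tfrac{(n-3)\pi}{4}+\tfrac{\pi}{2}z\big)$
and then runs the \emph{endpoint Stein--Tomas argument} directly, so that the exponential blow-up $e^{\pi|\mu|/2}$ of the cosine amplitude visibly cancels against the exponential decay of $\Gamma(1-2\beta+i\mu)$ via Stirling. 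The paper's approach buys transparency in the $\mu$-dependence; yours would replace the classical kernel asymptotics with a self-contained stationary-phase argument.

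However, as written your sketch has a structural error that would need repair. The even/odd symmetrization in $s=r-1$, and the quantities $A^z_{even}$, $A^z_{odd}$, only make sense for the \emph{two-sided} weight $\big((1-r^2)^2+\eps^2\big)^{-\alpha}$ used in Proposition~\ref{prop:main}, which is approximately even across $r=1$. The multiplier $(1-|\xi|^2)^z_+$ is supported entirely on $r\leq 1$; there is no odd/even decomposition of its weight around $r=1$. (Indeed the paper's own remark explicitly flags that one-sided and two-sided Bochner--Riesz operators behave differently below order $-1$, which is exactly the regime here.) The correct analogue, consistent with your integration-by-parts step, is to split the $r$-integral at the scale $|1-r|\sim (1+|x_n|)^{-1}$ and pair the Taylor-remainder bounds $|\partial_r^k(\eta_n\sigma_r)\check{\phantom{i}}|\les (1+|x_n|)^{\frac{1-n}{2}+k}$ against the surviving weight $(1-r^2)^{{\rm Re}\,z+2}$, reproducing the $\min(\cdot,1)$ gains of Proposition~\ref{prop:main} without any even/odd structure. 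Relatedly, the $\mu$-dependence does not live in $|A^z_{even}|$ or $|A^z_{odd}|$ as you claim: the modulus $|(1-r^2)^z|=(1-r^2)^{{\rm Re}\,z}$ has no $\mu$-dependence at all. The $\mu$ growth and the cancellation both sit in the scalar prefactor $(z+2)\Gamma(z+1)$ together with the $\tfrac{1}{(z+1)(z+2)}$ constants from integration by parts, so the bookkeeping would need to be reorganized accordingly.
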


\begin{proof}[Sketch of Proof]
Proposition~\ref{prop:complexB-R} follows from the same argument as the endpoint
Stein-Tomas theorem,
using the fact that the convolution kernel of $\tilde{S}(z)$ has an asymptotic
description
\begin{equation*}
\tilde{S}^z (|x|) \sim \frac{C_n}{|x|^{\frac{n+1}{2} + z}} 
\cos\Big(|x| - \frac{(n-3)\pi}{4} + \frac{\pi}{2}z\Big)
\end{equation*} 
for large $|x|$.  Note that for complex $z$, oscillations of the cosine function
in this formula have amplitude approximately $e^{(\pi/2){\rm Im}\, z}$.

For $z = -2\beta + i\mu$, the prefactor $(z+2)\Gamma(z+1)$
is dominated by $(1+|z|)e^{-(\pi/2){\rm Im}\,z}$, using Stirling's
approximation when $\mu$ is large, and the absence of poles
for ${\rm Re}\,z > -3$ when $\mu$ is small.
Hence the product $(z+2)\Gamma(1+z)\tilde{S}^{z}$
enjoys mapping estimates that are uniform in $\mu$ along this line.
\end{proof}

Consequently $|\tilde{G}(-2\beta + i\mu)| \leq (1+|\mu|) 
\norm[f][\frac{2n+2}{n+1+4\beta}] \norm[g][\frac{2n+2}{n+1+4\beta}]$.
If one constructs
$g_z$ to be a holomorphic family of simple functions
(as in Riesz-Thorin interpolation) that belong isometrically to
$L^2(\R^n)$ along the line ${\rm Re}\,z = -\beta$, and to
$L^{\frac{2n+2}{n+1+4\beta}}(\R^n)$ along the line ${\rm Re}\,z = -2\beta$.
it follows from the Three-Lines Theorem that
\begin{equation*}
|(z+2)\Gamma(z+1)\la\tilde{S}^zf, g_z\ra| \les (1 + |{\rm Im}\, z|) 
\norm[f][\frac{2n+2}{n+1+4\beta}]
\norm[g_z][\frac{2n+2}{n+1-4({\rm Re}\,z +\beta)}]
\end{equation*}
 For a fixed real value $\beta \leq \alpha \leq 2\beta$, one can arrange
for $g_{-\alpha}$ to be any simple function.
It follows by duality and density of simple functions that
\begin{equation*}
\norm[\Gamma(z+1)\tilde{S}^{-\alpha}f][\frac{2n+2}{n+1-4(\alpha-\beta)}]
\les \frac{1}{|2-\alpha|} \norm[f][\frac{2n+2}{n+1+4\beta}].
\end{equation*}

For $\alpha < 2$ and with the assumption that $\hat{f}$ vanishes on the unit sphere,
the left hand function is exactly $S^{-\alpha}f$.  This is the norm bound
claimed in Theorem~\ref{thm:B-Rextension}.

As a final note, we observe that estimates can also be made for $\alpha < \beta$
by interpolating between Theorem~\ref{thm:Helmholtz} and other known bounds
for Bochner-Riesz operators.  To give a simple example, $S^{-\alpha}$
maps $L^p(\R^2)$ to itself for each $\alpha < -\frac12$, and if $\hat{f}$
vanishes on the unit circle for a function $f \in L^1(\R^2)$
it is also true that $S^{-\frac34}f \in L^2(\R^2)$.
Complex interpolation then suggests that $S^0f \in L^q(\R^2)$ for all
$q > \frac54$.  This is a modest improvement over the generic
$L^1 \mapsto L^{4/3+}$ bound for the ball multiplier.  We do not claim that
the exponent $q = \frac54$ is sharp, and suspect that
the range of exponents can be extended further toward 1 by other
methods.

\bibliographystyle{abbrv}
\bibliography{MasterList}

\end{document}